\newtheorem{Theorem}{\bf Theorem}
\newtheorem{lemma}[Theorem]{\bf Lemma}
\newtheorem{proposition}[Theorem]{\bf Proposition}
\newtheorem{remark}[Theorem]{\bf Remark}
\newtheorem{theorem}[Theorem]{\bf Theorem}
     \title[On characterising analytically unramified local rings]{On characterising analytically unramified local rings}
      \author{Vijay Kodiyalam}
     \address{The Institute of Mathematical Sciences, Chennai, India and Homi Bhabha National Institute, Mumbai, India}
     \email{vijay@imsc.res.in}
     \author{J. K. Verma}
     \address{Indian Institute of Technology Gandhinagar, Palaj, Gandhinagar 382355, Gujarat, India}
     \email{jugal.verma@iitgn.ac.in}
\keywords{integral closure, analytically unramified ring, Rees algebra, Nagata ring}
\subjclass[2020]{Primary 13B22,13C13}
\begin{document}
     \begin{abstract}
     We generalise a classic result of Rees to characterise analytically unramified local rings using Rees algebras of modules.
      \end{abstract}
     \maketitle

A famous result of Rees in \cite{Res1961} characterises  analytically unramified Noetherian local rings $(R,{\mathfrak m})$ as those for which, for some ${\mathfrak m}$-primary ideal $I$ (equivalently, for all ideals $I$),  the integral closure of the Rees algebra $R[It]$ in $R[t]$ is a finitely-generated $R[It]$-module.
In this note, we generalise this result to obtain an analogous characterisation related to finiteness of integral closure of Rees algebras of modules. Our result is the following theorem.

\begin{theorem} \label{theorem-main} Let $(R,{\mathfrak m},k)$ be a Noetherian local ring. The following conditions are equivalent.
 \begin{enumerate}
 \item $R$ is analytically unramified.
  \item For any submodule $M \subseteq F$ with $F$ finitely-generated free, the integral closure of $S(M)$ in $S(F)$ is a finitely-generated $S(M)$-module.
  \item There exists a non-zero finite length module $Q$ and a finitely-generated free module $F$ mapping onto $Q$ with kernel $M$ such that the integral closure of $S(M)$ in $S(F)$ is a finitely-generated $S(M)$-module.
 \end{enumerate}
 \end{theorem}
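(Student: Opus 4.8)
The plan is to reduce the two non-trivial implications to Rees's theorem for ideals (quoted in the introduction), using the passage from the Rees algebra of a module to the Rees algebra of an associated ideal. First I would record the graded picture that underlies everything. Writing $S(F)=\mathrm{Sym}_R(F)=R[T_1,\dots,T_n]$ and letting $S(M)=\bigoplus_{d\ge 0}M^{d}$ be the subalgebra generated by $M$ in degree one, where $M^{d}\subseteq \mathrm{Sym}^{d}F=S(F)_d$ is the image of $\mathrm{Sym}^{d}M$, one checks that the integral closure of $S(M)$ in $S(F)$ is the graded subalgebra $\bigoplus_{d\ge 0}\overline{M^{d}}$, with $\overline{M^{d}}$ the integral closure of the module $M^{d}$ in $S(F)_d$. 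Finiteness over $S(M)$ is then the statement that this filtration is eventually $M$-adic, i.e.\ $\overline{M^{d+1}}=M\,\overline{M^{d}}$ for $d\gg 0$; when $F=R$ and $M=I$ this is exactly the finiteness of $\overline{R[It]}$ over $R[It]$. The implication $(2)\Rightarrow(3)$ is immediate, taking $F=R$, $M=\mathfrak m$, $Q=k$.

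For the substantial implications I would attach to a finite-colength submodule $M\subseteq F$ an $\mathfrak m$-primary ideal $I$ and compare their Rees algebras. After splitting off any free direct summand we may assume $M\subseteq\mathfrak m F$. Pass to the faithfully flat local base change $R\to R(\underline z)$ obtained by adjoining indeterminates $z_1,\dots,z_n$: this preserves analytic unramifiedness, and, being flat with geometrically regular fibres, it preserves both integral closures and the finiteness statements in $(2)$ and $(3)$. Over $R(\underline z)$ consider the graded surjection $\pi\colon R(\underline z)[T_1,\dots,T_n]\to R(\underline z)[t]$, $T_j\mapsto z_j t$, which is reduction modulo the regular sequence of generic linear forms $z_1T_j-z_jT_1$, $2\le j\le n$. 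Its restriction carries $S(M)$ onto $R(\underline z)[It]$, where $I$ is generated by the generic combinations $\sum_j z_j a_{ij}$ of the generators of $M$; since $F/M$ has finite length and $M\subseteq\mathfrak m F$ one checks that $\mathfrak m^{N}\subseteq I\subseteq\mathfrak m R(\underline z)$ for some $N$, so that $I$ is $\mathfrak m R(\underline z)$-primary. This $I$ is precisely a generic Bourbaki ideal of $M$ in the sense of Simis--Ulrich--Vasconcelos.

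Granting that integral closure commutes with this generic section -- that $\overline{S(M)}$ maps under $\pi$ onto the integral closure $\overline{R(\underline z)[It]}$ of $R(\underline z)[It]$ in $R(\underline z)[t]$ -- finiteness of $\overline{S(M)}$ over $S(M)$ becomes equivalent to finiteness of $\overline{R(\underline z)[It]}$ over $R(\underline z)[It]$. Rees's theorem, applied to the $\mathfrak m R(\underline z)$-primary ideal $I$, then closes both loops: for $(3)\Rightarrow(1)$ it converts the given module finiteness into analytic unramifiedness of $R(\underline z)$, hence of $R$; run in the other direction, starting from $\widehat R$ reduced it produces the ideal finiteness, which lifts back to finiteness of $\overline{S(M)}$ for every finite-colength $M$. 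A submodule of smaller rank is then handled by restricting to its rank and invoking the finite-colength case, yielding $(1)\Rightarrow(2)$ in general.

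The main obstacle is exactly the claim that integral closure of the module Rees algebra commutes with the generic section $\pi$. One inclusion is formal, since a ring homomorphism preserves integral dependence, so $\pi(\overline{S(M)})\subseteq\overline{R(\underline z)[It]}$; the delicate point is the reverse inclusion, where the genericity of $\underline z$ is indispensable in order that the section meet every Rees valuation of $M$. I expect to establish this through the valuative criterion for integral closure of modules together with the Bourbaki-sequence comparison of Simis--Ulrich--Vasconcelos, identifying the Rees valuations of $M$ with those of its generic Bourbaki ideal $I$; it is here that the finite-length hypothesis on $F/M$, guaranteeing that $I$ is $\mathfrak m$-primary, is genuinely used.
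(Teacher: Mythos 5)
Your route through a generic Bourbaki ideal is genuinely different from the paper's, but it has a real gap at its centre, and you have named it yourself: the claim that integral closure commutes with the generic section $\pi$. This is not a technical verification to be deferred; it is the entire content of the theorem in this approach. Note which inclusion each implication needs. The formal inclusion $\pi(\overline{S(M)})\subseteq\overline{R(\underline z)[It]}$ is useless for $(3)\Rightarrow(1)$: there you are given that $\overline{S(M)}$ is module-finite and must bound $\overline{R(\underline z)[It]}$ from above, which requires the reverse inclusion (every element integral over $R(\underline z)[It]$ must be seen from $S(M)$). The tools you propose for this --- the valuative description of integral closure of modules and the identification of Rees valuations of $M$ with those of its generic Bourbaki ideal, as in Simis--Ulrich--Vasconcelos --- are developed under hypotheses such as the ring being universally catenary, equidimensional, or analytically unramified; in $(3)\Rightarrow(1)$ analytic unramifiedness is precisely what you are trying to prove, so invoking them is circular. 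A secondary gap: your reduction of the general case of $(2)$ to the finite-colength case via ``restricting to the rank of $M$'' does not work, since $F/M$ can fail to have finite length even when $M$ and $F$ have equal rank (take $M$ a non-maximal prime ideal in $F=R$), and a rank-$s$ submodule need not sit inside a rank-$s$ free direct summand of $F$.

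For contrast, the paper never passes to an auxiliary ideal by specialization and so never faces the commutation problem. For $(1)\Rightarrow(2)$ it completes the Rees algebra itself: $\widehat{S(M)}=R[[L_1,\dots,L_d]]$ is a complete local ring, hence Nagata, so its integral closure in the essentially-finite-type extension $\widehat{S(M)}[T_1,\dots,T_r]$ is module-finite, and a filtration argument (Lemma~\ref{lemma-fingen}) descends this to $S(M)\subseteq V\subseteq S(F)$. For $(3)\Rightarrow(1)$ it attaches to $M$ the ideal of maximal minors $I=I(M)$, which is $\mathfrak m$-primary by the finite-length hypothesis, and uses two explicit containments: $\overline{I(S_n(M))}\,S_n(F)\subseteq V_n$ (Lemma~\ref{ann}) together with the Bruns--Vasconcelos identity $I(S_n(M))=I^{\binom{n+r-1}{r}}$ up to integral closure on one side, and the annihilator estimate $\mathrm{ann}\bigl(S_n(F)/S_{n-k}(M)S_k(F)\bigr)\subseteq\mathfrak m^{n-k}$ (via Lemma~\ref{lemma-firstentry}) on the other, which feed directly into Rees's Lemma~\ref{rees}. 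If you want to salvage your approach, you would need an unconditional proof that $\overline{M^{n}}$ surjects onto $\overline{I^{n}}$ under the generic projection over an arbitrary Noetherian local ring; as it stands, that step is asserted rather than proved, and the proposal is incomplete.
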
    
 
 The notations $S(F)$ and $S(M)$ in the theorem refer to the symmetric algebra of $F$ (which is a polynomial ring over $R$) and to the image of the symmetric algebra of $M$ in $S(F)$, both of which are ${\mathbb N}$-graded $R$-algebras.
 The algebra $S(M)$ is the Rees algebra of $M \subseteq F$ and generalises the Rees algebra of an ideal.
 
 We will use two facts about graded rings. The first is that the integral closure of an ${\mathbb N}$-graded ring in another is also compatibly ${\mathbb N}$-graded - see Theorem 2.3.2 of \cite{SwnHnk2006} for a proof. The second is the following elementary lemma whose proof we omit.
 
 \begin{lemma}\label{lemma-fingen} Let $A \subseteq B \subseteq C$ be ${\mathbb N}$-graded rings. Suppose that (i) $C_0$ is a finitely-generated $A_0$-module, (ii) $A=A_0[A_1]$, and (iii) $A$ and $C$ are Noetherian. Then the following conditions are equivalent:
\begin{enumerate}
\item $B$ is finitely-generated as an $A$-module.
\item There is a $k \geq 0$ such that for all $n \geq k$, $B_{n} \subseteq A_{n-k}C_k$.
\end{enumerate} 
\end{lemma}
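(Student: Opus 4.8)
The plan is to prove the two implications separately, with $(1)\Rightarrow(2)$ being routine and $(2)\Rightarrow(1)$ carrying the real content. Throughout I would lean on the structure theory of Noetherian $\mathbb{N}$-graded rings: since $A$ and $C$ are Noetherian, the degree-zero subrings $A_0\cong A/A_+$ and $C_0\cong C/C_+$ are Noetherian quotients, and each graded piece $C_n$ is a finitely-generated $C_0$-module. Combining this with hypothesis (i) that $C_0$ is finitely-generated over $A_0$ yields the key ancillary fact that \emph{every} $C_n$ is a finitely-generated $A_0$-module, which both halves of the argument will use.

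For $(1)\Rightarrow(2)$, I would choose homogeneous generators $b_1,\dots,b_r$ of $B$ as an $A$-module, of degrees $d_1,\dots,d_r$, and set $k=\max_i d_i$. Because hypothesis (ii) forces $A_mA_{m'}=A_{m+m'}$ for all $m,m'\ge 0$, for $n\ge k$ one can factor $A_{n-d_i}=A_{n-k}A_{k-d_i}$ and estimate
\[
A_{n-d_i}\,b_i = A_{n-k}\,\bigl(A_{k-d_i}b_i\bigr)\subseteq A_{n-k}\,\bigl(C_{k-d_i}C_{d_i}\bigr)\subseteq A_{n-k}C_k,
\]
using $b_i\in C_{d_i}$. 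Summing over $i$ gives $B_n=\sum_i A_{n-d_i}b_i\subseteq A_{n-k}C_k$, which is exactly (2).

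The substance is in $(2)\Rightarrow(1)$, and the idea is to trap the high-degree part of $B$ inside a manifestly Noetherian $A$-module. By the ancillary fact, $C_k$ is a finitely-generated $A_0$-module, so the $A$-submodule $A\cdot C_k\subseteq C$ that it generates is a finitely-generated, hence Noetherian, $A$-module, whose degree-$n$ piece is precisely $A_{n-k}C_k$ for $n\ge k$. The hypothesis therefore says exactly that $B_{\ge k}:=\bigoplus_{n\ge k}B_n$ is contained in $A\cdot C_k$. Since the grading of $A$ is concentrated in non-negative degrees, $A\cdot B_{\ge k}\subseteq B_{\ge k}$, so $B_{\ge k}$ is an $A$-submodule of the Noetherian module $A\cdot C_k$ and is thus finitely generated over $A$.

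It then remains to absorb the finitely many low degrees. For each $n<k$ the ancillary fact makes $C_n$ a finitely-generated module over the Noetherian ring $A_0$, so the submodule $B_n\subseteq C_n$ is itself finitely generated over $A_0$; taking $A_0$-generators of $B_0,\dots,B_{k-1}$ together with homogeneous $A$-generators of $B_{\ge k}$ produces a finite generating set for $B$ over $A$, proving (1). I expect the main obstacle to be organisational rather than deep: one must apply the structure theorem to $C$ (never to $B$, which is not assumed Noetherian), and recognise that hypothesis (i) is the precise bridge converting $C_0$-finiteness into the $A_0$-finiteness of each $C_n$ on which the whole argument turns.
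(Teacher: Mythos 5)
Your proof is correct, and since the paper explicitly omits the proof of this lemma there is nothing to compare it against; your argument (reduce to the $A_0$-finiteness of each $C_n$ via the graded Noetherian structure of $C$, trap $B_{\geq k}$ inside the Noetherian $A$-module $A\cdot C_k$, and handle low degrees over $A_0$) is the natural one the authors surely intend.
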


Products such as $A_{n-k}C_k$ refer to the $A_0$-submodule of $C_{n}$ generated by products of all pairs of elements of $A_{n-k}$ and $C_k$. Thus, for instance, $A_n = (A_1)^n$ since $A = A_0[A_1]$.

The next proposition is a special case of the implication $(1) \Rightarrow (2)$ of Theorem~\ref{theorem-main}. Here and in the sequel, we will adopt the following notation. Let $V$ denote the integral closure of $S(M)$ in $S(F)$, which is a compatibly graded intermediate subalgebra. Thus, $V = \bigoplus_{n \geq 0} V_n$ with $S_n(M) \subseteq V_n \subseteq S_n(F)$.

We will also use the notion and properties of a Nagata ring - a Noetherian ring $R$ for every prime $P$ of which, the integral closure of $\frac{R}{P}$ in a finite extension of its field of fractions is a finitely-generated $R$-module. A  Noetherian complete local ring is a Nagata ring - see Chapter 12 of \cite{Mts1980}. We will also need to use the fact that if $R$ is a Nagata ring and $R \rightarrow S$ is essentially of finite type, i.e., $S$ is a localisation of a finitely-generated $R$-algebra, then the integral closure of $R$ in $S$ is a finitely-generated $R$-module. To prove this, very briefly, one embeds $S$ in the product of all $S_Q$ where $Q$ is a minimal prime ideal of $S$, reduces to proving the statement for a single such $S_Q$ - which is a field that is finitely-generated over the field of fractions of $\frac{R}{P}$ where $P$ is the contraction of $Q$ to $R$ - and then uses that the algebraic closure of the field of fractions of $\frac{R}{P}$ in $S_Q$ is actually a finite extension.

\begin{proposition} \label{prop-crnlr}Let $(R,{\mathfrak m},k)$ be a complete, local, reduced Noetherian ring. Then for any submodule $M \subseteq F$ with $F$ finitely-generated free, the integral closure of $S(M)$ in $S(F)$ is a finitely-generated $S(M)$-module.
\end{proposition}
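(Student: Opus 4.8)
The plan is to apply the stated Nagata-ring fact directly, but with the base ring taken to be the Rees algebra $S(M)$ rather than $R$ itself. First I would record the structure of the three algebras involved. Writing $F = R^d$, we have $S(F) = R[x_1,\dots,x_d]$; since $M$ is a submodule of the finitely-generated module $F$ over the Noetherian ring $R$ it is finitely generated, say by $m_1,\dots,m_p$, so that $S(M) = R[m_1,\dots,m_p]$ is a finitely-generated $R$-subalgebra of $S(F)$ with $S(M)_0 = R$. Consequently $S(F) = S(M)[x_1,\dots,x_d]$ is a finitely-generated, hence essentially of finite type, $S(M)$-algebra.

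Next I would promote the Nagata hypothesis from $R$ to $S(M)$. As a complete local ring, $R$ is Nagata, and the Nagata property passes to finitely-generated algebras (Chapter 12 of \cite{Mts1980}), so $S(M)$ is again a Nagata ring. The reducedness of $R$ enters on the side of the top algebra: since $R$ is reduced, $S(F) = R[x_1,\dots,x_d]$ is reduced, which is exactly what allows one to embed $S(F)$ into the product of its localisations at its minimal primes and thereby run the argument sketched above for the Nagata fact. This hypothesis is genuinely needed: over a non-reduced $R$ the nilpotent elements $a x_i^n$ lie in the integral closure for every $n$ and already obstruct finiteness over $S(M)$.

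With these two observations in hand the proposition is immediate: applying the stated fact with the Nagata ring taken to be $S(M)$ and the essentially-of-finite-type extension taken to be $S(M) \hookrightarrow S(F)$ shows that the integral closure of $S(M)$ in $S(F)$ is a finitely-generated $S(M)$-module, which is the assertion. By the graded integral closure fact (Theorem~2.3.2 of \cite{SwnHnk2006}) this integral closure is precisely the compatibly graded algebra $V$, so the conclusion could equally be repackaged through Lemma~\ref{lemma-fingen} should a degree-by-degree statement be wanted later.

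The one point I would treat carefully is the transfer of the Nagata property together with the bookkeeping of which ring plays the role of the base: the fact as stated is phrased for a base Nagata ring $R$, and the entire argument hinges on invoking it with $S(M)$ in that role while keeping $S(F)$ reduced, so that the reduction to fields finitely generated over the fraction fields of the quotients $S(M)/\mathfrak p$ remains valid.
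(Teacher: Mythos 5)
Your argument is correct, but it takes a genuinely different and shorter route than the paper's. You apply the stated Nagata fact directly to the inclusion $S(M) \subseteq S(F)$: since $S(M)$ is a finitely generated $R$-algebra, the ascent of the Nagata property along finitely generated extensions (Theorem 72 in Chapter 12 of \cite{Mts1980}) makes $S(M)$ Nagata, and reducedness of $S(F)=R[x_1,\dots,x_d]$ then lets the fact deliver the conclusion in one step. The paper instead completes $S(M)$ along its irrelevant ideal to obtain $\widehat{S(M)} = R[[L_1,\dots,L_d]]$, which is a complete local Noetherian ring and hence Nagata with no appeal to the ascent theorem; it applies the fact to $\widehat{S(M)} \subseteq \widehat{S(M)}[T_1,\dots,T_r]$ and then needs an explicit filtration argument together with Lemma~\ref{lemma-fingen} to descend the resulting finiteness back to the graded algebra $V$ over $S(M)$. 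What your route buys is brevity and extra generality: completeness, and even locality, of $R$ play no role beyond making $R$ a reduced Nagata ring, so you in fact prove the proposition for any such ring. What the paper's route buys is a lighter toolkit: it relies only on the comparatively standard statement that complete local Noetherian rings are Nagata, whereas the ascent theorem you invoke is one of the deeper results of the theory (though it is indeed in the cited Chapter 12, so your reference is legitimate). Both arguments correctly locate where reducedness of $R$ is used, namely in the reducedness of the top algebra, which is what permits the embedding into the product of its localisations at minimal primes in the proof of the Nagata fact; your side remark that nilpotents such as $a x_i^n$ would otherwise obstruct finiteness is accurate.
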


\begin{proof} We have $S(M) = \bigoplus_{n \geq 0} S_n(M) \subseteq \bigoplus_{n \geq 0} V_n \subseteq \bigoplus_{n \geq 0} S_n(F) = S(F)$.
Define  the algebras
$$
\widehat{S(M)} = \prod_{n \geq 0} S_n(M) \subseteq \prod_{n \geq 0} S_n(F) = \widehat{S(F)}.
$$
Choose a basis $\{T_1,\cdots,T_r\}$ of $F$ and a generating set $\{L_1,\cdots,L_d\}$ of $M$. We will regard the $L_j$ as linear forms in $\{T_1,\cdots,T_r\}$ with coefficients in $R$. We then have natural identifications of algebras
$$
\begin{array}{ccccc}
S(M) = R[L_1,\cdots,L_d] & \subseteq &  R[T_1,\cdots,T_r] = S(F)\\
\ \ \ \ \ \ \ \ \ \ \ \ \rotatebox[origin=c]{270}{$\subseteq$} &  & \!\!\!\!\!\!\!\!\!\!\!\!\!\!\!\!\!\!\rotatebox[origin=c]{270}{$\subseteq$}\\
\widehat{S(M)} = {R}[[L_1,\cdots,L_d]] &  \subseteq & {R}[[T_1,\cdots,T_r]] = \widehat{S(F)}.
\end{array}
$$

The algebra $\widehat{S(M)}$ is the image of $R[[Z_1,\cdots,Z_d]]$ under the (continuous) homomorphism to ${R}[[T_1,\cdots,T_r]]$ taking $T_j$ to $L_j$ and is therefore a Noetherian complete local ring. Hence $\widehat{S(M)}$ is a  Nagata ring, 
and since $\widehat{S(M)}[T_1,\cdots,T_r] \subseteq  \widehat{S(F)}$ is reduced and is a finitely-generated $\widehat{S(M)}$-algebra,  the integral closure of $\widehat{S(M)}$ in $\widehat{S(M)}[T_1,\cdots,T_r] $ is a finitely-generated $\widehat{S(M)}$-module.

The intermediate subalgebra   $\widehat{S(M)}[T_1,\cdots,T_r]$ of $\widehat{S(M)} \subseteq  \widehat{S(F)}$ has an ascending filtration by  $\widehat{S(M)}$-submodules generated by all polynomials of degree at most $k$ in $T_1,\cdots,T_r$ with $\widehat{S(M)}$ coefficients. Denote this submodule by $F_k\left(\widehat{S(M)}[T_1,\cdots,T_r]\right)$. 
A little thought shows that this submodule has an alternative description as
$$
\left(\prod_{0 \leq n < k} S_n(F)\right) \times \left( \prod_{n \geq k} S_{n-k}(M)S_k(F)\right) \subseteq \prod_{n \geq 0} S_n(F) =  \widehat{S(F)}.
$$
Thus, $\widehat{S(M)}[T_1,\cdots,T_r]$ is
$$
\bigcup_{k \geq 0} \left( \left(\prod_{0 \leq n < k} S_n(F)\right) \times \left( \prod_{n \geq k} S_{n-k}(M)S_k(F)\right)\right) \subseteq \prod_{n \geq 0} S_n(F),
$$
an ascending union of finitely-generated $\widehat{S(M)}$-submodules.

Since the integral closure of $\widehat{S(M)}$ in $\widehat{S(M)}[T_1,\cdots,T_r]$ is a finitely-generated $\widehat{S(M)}$-module, it follows that it is contained in $F_k\left(\widehat{S(M)}[T_1,\cdots,T_r]\right)$
 for some $k \geq 0$. Hence the integral closure $V$ of $S(M)$ in $S(M)[T_1,\cdots,T_r] = S(F)$ is also contained in $F_k\left(\widehat{S(M)}[T_1,\cdots,T_r]\right)$. In particular,
$$
V_n \subseteq F_k\left(\widehat{S(M)}[T_1,\cdots,T_r]\right) \cap S_n(F) = S_{n-k}(M)S_k(F),
$$
for all $n \geq k$, where the last equality follows from the alternative description of $F_k\left(\widehat{S(M)}[T_1,\cdots,T_r]\right)$.
Finally, an application of Lemma \ref{lemma-fingen} to $S(M) \subseteq V \subseteq S(F)$ shows that $V$ is a finitely-generated $S(M)$-module, as desired.
\end{proof}

The next few results will be needed in the proof of the implication $(3) \Rightarrow (1)$ of Theorem~\ref{theorem-main}.
The next lemma is Lemma~1 of \cite{Res1961} which gives the easier direction of his characterisation result. 
The notation $\bar{J}$ for an ideal $J$ denotes, as usual, the integral closure of $J$.

\begin{lemma}\label{rees}
Suppose that $(R,{\mathfrak m},k)$ is a Noetherian local ring and $I$ is an ${\mathfrak m}$-primary ideal of $R$ such that
$\overline{I^n} \subseteq I^{m(n)}$ where $m(n)$ goes to infinity as $n$ goes to infinity. Then $R$ is analytically unramified.
\end{lemma}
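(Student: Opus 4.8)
The plan is to prove analytic unramifiedness directly, by showing that the $\mathfrak m$-adic completion $\widehat R$ is reduced. First I would record that the hypotheses force the filtration $\{\overline{I^n}\}_{n\ge 0}$ to be cofinal with the $\mathfrak m$-adic filtration: since $I$ is $\mathfrak m$-primary there is an $s$ with $\mathfrak m^{s} \subseteq I$, so $\mathfrak m^{sn} \subseteq I^n \subseteq \overline{I^n}$, while on the other side $\overline{I^n} \subseteq I^{m(n)} \subseteq \mathfrak m^{m(n)}$ with $m(n) \to \infty$. Hence $\widehat R \cong \varprojlim_n R/\overline{I^n}$ as topological rings, and any $\xi \in \widehat R$ is represented by a system of lifts $x_n \in R$ satisfying $x_{n+1} - x_n \in \overline{I^n}$, where $x_n \bmod \overline{I^n}$ is the $n$-th component of $\xi$.

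The key technical input is the elementary implication, for a fixed $N \ge 1$,
\[
x^N \in \overline{I^{Nm}} \implies x \in \overline{I^m},
\]
obtained by rewriting an equation of integral dependence of $x^N$ over $I^{Nm}$ as one for $x$ over $I^m$: if $(x^N)^p + b_1(x^N)^{p-1} + \cdots + b_p = 0$ with $b_i \in (I^{Nm})^i = I^{Nmi}$, then the same relation, read as a monic polynomial in $x$ of degree $Np$, exhibits $x$ as integral over $I^m$, since each $b_i$ lies in $(I^m)^{Ni}$, the correct graded piece.

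Granting this, suppose $\xi^N = 0$ in $\widehat R$ and fix lifts $x_n$ as above. Applying the ring map $\widehat R \to R/\overline{I^n}$ to $\xi^N = 0$ gives $x_n^N \in \overline{I^n} \subseteq \overline{I^{N\lfloor n/N\rfloor}}$, whence $x_n \in \overline{I^{\lfloor n/N\rfloor}}$ by the displayed implication. To conclude $\xi = 0$ I would check that its image in every $R/\overline{I^m}$ vanishes: fixing $m$ and taking $n \ge Nm$, the telescoped relation $x_m - x_n \in \overline{I^m}$ together with $x_n \in \overline{I^{\lfloor n/N\rfloor}} \subseteq \overline{I^m}$ forces $x_m \in \overline{I^m}$. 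Thus $\xi = 0$, so $\widehat R$ is reduced and $R$ is analytically unramified.

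The point to get right is conceptual rather than computational: one must avoid forming integral closures inside $\widehat R$, where the relationship between integral closure and the extended ideals $\overline{I^n}\widehat R$ is unclear before reducedness is known, and instead carry the whole argument through using only the fixed filtration $\{\overline{I^n}\}$ of $R$ and the elementary implication above. With that identification of $\widehat R$ with $\varprojlim R/\overline{I^n}$ in place, the remaining index bookkeeping with $\lfloor n/N\rfloor$ is routine.
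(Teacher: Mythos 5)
The paper offers no proof of this lemma to compare against: it is quoted verbatim as Lemma~1 of Rees's 1961 note and the proof is deferred to that reference. Your argument is correct, self-contained, and is essentially the classical one. Both pillars hold: (i) cofinality of $\{\overline{I^n}\}$ with $\{\mathfrak{m}^n\}$ --- the hypothesis gives $\overline{I^n}\subseteq I^{m(n)}\subseteq \mathfrak{m}^{m(n)}$ with $m(n)\to\infty$, while $\mathfrak{m}$-primariness gives $\mathfrak{m}^{sn}\subseteq I^n\subseteq\overline{I^n}$ --- so indeed $\widehat{R}\cong\varprojlim_n R/\overline{I^n}$; and (ii) the implication $x^N\in\overline{(I^m)^N}\Rightarrow x\in\overline{I^m}$, which you verify correctly: rereading the degree-$p$ equation for $x^N$ over $I^{Nm}$ as a degree-$Np$ equation for $x$, the coefficient of $x^{Np-j}$ is $b_i\in I^{Nmi}=(I^m)^{Ni}$ when $j=Ni$ and $0$ otherwise, so the grading condition for integral dependence over $I^m$ is met. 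The remaining bookkeeping (that $\xi^N=0$ forces $x_n^N\in\overline{I^n}\subseteq\overline{I^{N\lfloor n/N\rfloor}}$, hence $x_n\in\overline{I^{\lfloor n/N\rfloor}}$, and the telescoping $x_n-x_m\in\overline{I^m}$ for $n\geq m$) is also right, and your closing remark identifies exactly where a careless version of this argument would fail: the whole proof must be run with the fixed filtration $\{\overline{I^n}\}$ of $R$ rather than with integral closures formed in $\widehat{R}$. Note that the hypothesis $m(n)\to\infty$ is used precisely and only to make $\{\overline{I^n}\}$ define the $\mathfrak{m}$-adic topology; without it the filtration need not be separated and the identification of $\widehat{R}$ with $\varprojlim R/\overline{I^n}$ breaks down.
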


Before stating the next lemma, we will explain the notation used. Let $R$ be a Noetherian ring and $M \subseteq F$ be $R$-modules with $F$ finitely-generated and free. Let $I(M)$ denote the ideal of maximal minors of a matrix whose columns are generators of $M$ expressed in terms of a basis of $F$. More generally, let $I(S_n(M))$ be the ideal of maximal minors of a matrix whose columns are generators of $S_n(M)$ expressed in terms of a basis of $S_n(F)$. Recall that $V$ is the integral closure of $S(M)$ in $S(F)$.

\begin{lemma} \label{ann}
Suppose that $M \subseteq F$ are modules over a Noetherian ring $R$ with $F$ finitely-generated and free. 
Then for each $n \geq 1$, $\overline{I(S_n(M))} S_n(F) \subseteq V_n$. 
\end{lemma}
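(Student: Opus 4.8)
The plan is to treat each graded piece separately: for fixed $n$, view $S_n(M) \subseteq S_n(F)$ as a submodule of a finite free $R$-module (both are free since $F$ is free; say $N := \operatorname{rank} S_n(F)$), prove the statement at this level, and then reconcile the resulting integrality with membership in $V_n$. The argument is the module analogue, applied to $S_n(M)\subseteq S_n(F)$, of the classical fact $\overline{I(M)}\,F\subseteq V_1$, and it splits into a determinantal (adjugate) step and a bootstrapping step that passes from an ideal relation to its integral closure. Throughout I would work inside the standard-graded ring $S(F)$, exploiting the identities $S_n(F)^i = S_{ni}(F)$ and $S_n(M)^i = S_{ni}(M)$, which hold because $S(F)$ and $S(M)$ are generated in degree $1$.

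First I would prove the exact containment $I(S_n(M))\,S_n(F)\subseteq S_n(M)$. Expressing the generators of $S_n(M)$ as the columns of an $N\times d$ matrix over $R$ in a chosen basis of $S_n(F)$, each maximal minor is the determinant $\delta=\det B$ of an $N\times N$ submatrix $B$ whose columns lie in $S_n(M)$. The adjugate identity $B\cdot\operatorname{adj}(B)=\delta\, I_N$ shows that $\delta$ times each basis vector of $S_n(F)$ is an $R$-linear combination of the columns of $B$, hence lies in $S_n(M)$; so $\delta\, S_n(F)\subseteq S_n(M)$, and summing over all maximal minors gives $I(S_n(M))\,S_n(F)\subseteq S_n(M)$. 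Raising this to the $i$-th power in $S(F)$ and using the standard-graded identities yields the form I actually need, namely $I(S_n(M))^i\, S_{ni}(F)\subseteq S_{ni}(M)$.

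Next I would upgrade the ideal $I(S_n(M))$ to its integral closure. Fix $a\in\overline{I(S_n(M))}$ with integral equation $a^m + c_1 a^{m-1} + \cdots + c_m = 0$, where $c_i\in I(S_n(M))^i$, and fix $w\in S_n(F)$; the goal is to show that $aw$, regarded in $S_n(F)\subseteq S(F)$, is integral over $S(M)$. Setting $s_i := c_i w^i$, the previous step gives $s_i\in I(S_n(M))^i\,S_{ni}(F)\subseteq S_{ni}(M)\subseteq S(M)$, so each $s_i$ is a homogeneous element of $S(M)$ of degree $ni$. Multiplying the integral equation by $w^m$ and regrouping via $a^{m-i}w^{m-i}=(aw)^{m-i}$ turns $a^m w^m = -\sum_i c_i a^{m-i} w^m$ into $(aw)^m + \sum_{i=1}^m s_i (aw)^{m-i} = 0$, a genuine integral equation for $aw$ over $S(M)$. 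Hence $aw\in V$, and being homogeneous of degree $n$ it lies in $V_n$; letting $a$ and $w$ vary gives $\overline{I(S_n(M))}\,S_n(F)\subseteq V_n$.

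The two computational steps are routine; the part demanding care is the graded bookkeeping that guarantees the $s_i$ are honest homogeneous elements of $S(M)$ of the right degree, which is precisely where the standard-graded identities and the power containment $I(S_n(M))^i S_{ni}(F)\subseteq S_{ni}(M)$ are used. I would also dispose of the degenerate case in which $S_n(M)$ has fewer than $N$ generators (or fails to have full rank), so that there are no $N\times N$ minors and $I(S_n(M))=(0)$: then $\overline{I(S_n(M))}$ is the nilradical of $R$, any $a$ in it is nilpotent, and $aw$ is nilpotent, hence trivially integral over $S(M)$, so the asserted containment still holds.
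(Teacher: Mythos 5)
Your proof is correct and takes essentially the same route as the paper's: the adjugate identity yields $I(S_n(M))\,S_n(F)\subseteq S_n(M)$, and multiplying the integral equation of $a\in\overline{I(S_n(M))}$ by $w^m$ produces an integral equation for $aw$ over $S(M)$, the coefficients $c_iw^i$ landing in $S_{ni}(M)$ exactly as in the paper's check that $a_jZ^j\in S_{jn}(M)$. The only (immaterial) slip is the parenthetical claim that $S_n(M)$ is free, which is false in general but never used, since the argument only needs a basis of $S_n(F)$ and a finite generating set of $S_n(M)$.
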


\begin{proof} Choose a basis $\{T_1,\cdots,T_r\}$ of $F$ and a generating set $\{L_1,\cdots,L_d\}$ of $M$ and consider the $r \times d$ matrix, say $A$,  whose columns are the coefficients of $T_1,\cdots,T_r$ in $L_1,\cdots,L_d$. A typical generator of $I(M)$ is the determinant of an $r  \times r$ submatrix, say $C$, of $A$. The equation $det(C)I = C.adj(C)$ shows that $det(C)F \subseteq  M$. Explicitly, $det(C)T_k = d_{1k}C_1 + d_{2k}C_2 + \cdots + d_{rk}C_r$ where $C_1,\cdots,C_r$ are the columns of $C$ (which are some of the $L_1,\cdots,L_d$) and $d_{1k},\cdots,d_{rk}$ are the entries of the $k$-th column of $adj(C)$.
Hence $I(M)F \subseteq M$. More generally, applying similar reasoning to $S_n(M) \subseteq S_n(F)$, we get $I(S_n(M))S_n(F) \subseteq S_n(M)$.

Now we need to see that for any element $x \in \overline{I(S_n(M))}$ and any monomial, say $Z$, of degree $n$ in $T_1,\cdots,T_r$,  the element $xZ \in S_n(F)$ satisfies an integral equation over $S(M)$ (and is consequently is in $V_n$). Suppose that 
$$
x^p+a_1x^{p-1}+\cdots+a_p = 0
$$
is an integral equation for $x$ over $I(S_n(M))$ where $a_j \in I(S_n(M))^j$ for $1 \leq j \leq p$. Multiply this equation by $Z^p$ to get
$$
(xZ)^p+(a_1Z)(xZ)^{p-1}+\cdots+(a_pZ^p) = 0.
$$
We assert that each $a_jZ^j$ is in $S_{jn}(M)$ thereby showing that $xZ$ is integral over $S(M)$.

To see this, note that each $a_j$ is a linear combination, with coefficients in $R$, of products of the form $b_1b_2\cdots b_j$ where the $b_k \in I(S_n(M))$. Since each $b_kZ \in S_n(M)$ (as $I(S_n(M))S_n(F) \subseteq S_n(M)$), it follows that $a_jZ^j \in S_{jn}(M)$, as needed.
\end{proof}

\begin{lemma} \label{lemma-firstentry} Suppose that $M \subseteq F$ over a Noetherian local ring $(R,{\mathfrak m},k)$ with $F$ free and finitely-generated and $Q = \frac{F}{M}$ of finite length and non-zero. Then, there is a basis  $\{T_1,\cdots,T_r\}$ of $F$  and a set of generators $\{L_1,\cdots,L_d\}$ of $M$ such that the coefficient of $T_1$ in each $L_j$ is in ${\mathfrak m}$.
\end{lemma}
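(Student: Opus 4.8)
The plan is to reduce the whole question to linear algebra over the residue field $k$ and then lift. Write $\bar{F} = F/{\mathfrak m}F$, an $r$-dimensional $k$-vector space, and let $\bar{M}$ denote the image of $M$ in $\bar{F}$. If we write $L_j = \sum_i a_{ij}T_i$, then the coefficient of $T_1$ is $a_{1j}$, and $a_{1j} \in {\mathfrak m}$ is equivalent, after reducing modulo ${\mathfrak m}$, to the vanishing of the $\bar{T}_1$-coordinate of $\bar{L}_j$, i.e. to $\bar{L}_j$ lying in the $k$-span $H = \langle \bar{T}_2,\cdots,\bar{T}_r\rangle$. Since the images of a generating set of $M$ span $\bar{M}$, the entire problem reduces to producing a basis $\bar{T}_1,\cdots,\bar{T}_r$ of $\bar{F}$ with $\bar{M} \subseteq H$ and then lifting it to $F$.

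First I would check that $\bar{M}$ is a proper subspace of $\bar{F}$. Indeed, $Q/{\mathfrak m}Q = F/(M+{\mathfrak m}F)$, and since $Q$ is a non-zero finitely-generated module, Nakayama's lemma gives $Q/{\mathfrak m}Q \neq 0$; that is, $M + {\mathfrak m}F \neq F$, which says exactly that $\bar{M} \neq \bar{F}$. (Only the non-vanishing of $Q$ is used here, not its finite length.) Consequently $\dim_k \bar{M} \leq r-1$, so I may choose a hyperplane $H$ of $\bar{F}$ containing $\bar{M}$, take a basis $\bar{T}_2,\cdots,\bar{T}_r$ of $H$, and extend it by any vector $\bar{T}_1 \in \bar{F} \setminus H$ to a basis of $\bar{F}$.

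Next I would lift $\bar{T}_1,\cdots,\bar{T}_r$ to elements $T_1,\cdots,T_r \in F$. Because these reduce to a basis modulo ${\mathfrak m}$ and $F$ is finitely-generated over the local ring $R$, the induced map $R^r \to F$ sending the standard basis to $T_1,\cdots,T_r$ is surjective by Nakayama; being a surjection between free modules of the same finite rank over a commutative ring it is also injective, so $\{T_1,\cdots,T_r\}$ is a genuine basis of $F$. Finally I would take any generating set $\{L_1,\cdots,L_d\}$ of $M$: each $L_j$ lies in $M$, hence its image in $\bar{F}$ lies in $\bar{M} \subseteq H = \langle \bar{T}_2,\cdots,\bar{T}_r\rangle$, so the $\bar{T}_1$-coordinate of $\bar{L}_j$ vanishes and the coefficient of $T_1$ in $L_j$ lies in ${\mathfrak m}$, as required.

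There is no serious obstacle in this argument once it is reduced modulo ${\mathfrak m}$; it is essentially elementary linear algebra. The two points that require care are the application of Nakayama's lemma to guarantee that $\bar{M}$ is a proper subspace of $\bar{F}$ (this is the only place the hypothesis $Q \neq 0$ is used), and the verification that the lifted elements $T_1,\cdots,T_r$ form an actual basis of $F$ rather than merely a generating set.
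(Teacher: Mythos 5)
Your proof is correct and is essentially the paper's argument viewed in the dual: the paper lifts a surjection of $F$ onto $k$ (obtained by passing from $Q$ to a quotient isomorphic to $k$) to a basis element $f_1$ of $F^*$ with $f_1(M) \subseteq {\mathfrak m}$ and then takes the dual basis of $F$, while you lift a basis of $F/{\mathfrak m}F$ adapted to a hyperplane containing the image of $M$; the two constructions are equivalent. Your side remark that only $Q \neq 0$, not its finite length, is needed is also consistent with the paper, whose filtration of $Q$ by copies of $k$ merely serves to produce a quotient of $Q$ isomorphic to $k$, which Nakayama already provides.
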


\begin{proof} Filtering $Q$ by copies of $k$, the residue field of $R$, we may enlarge $M$ so that $Q \cong k$. The isomorphism of $Q$ to $k$ gives a map of $F$ onto $k = \frac{R}{\mathfrak m}$ which can be lifted to a map from $F$ to $R$ which is necessarily onto. This map, say $f_1: F \rightarrow R$, is a basis element of $F^*$ and can be completed to a basis $\{f_1,\cdots,f_r\}$ of $F^*$. Let $\{T_1,\cdots,T_r\}$ be the dual basis of $F$, so that $f_i(\cdot)$ gives the coefficient of $T_i$. Since $f_1$ is a lift of an isomorphism of $ \frac{F}{M}$ to $k$, necessarily $f_1(M) \subseteq {\mathfrak m}$, or equivalently, the coefficient of $T_1$ in each $L_j$ is in ${\mathfrak m}$.
\end{proof}

\begin{proof}[Proof of Theorem \ref{theorem-main}] 
We will prove the two non-trivial implications.\\
$(1) \Rightarrow (2)$: Choose a basis $\{T_1,\cdots,T_r\}$ of $F$ and a generating set $\{L_1,\cdots,L_d\}$ of $M$. 
Let $V$ be the integral closure of $S(M)$ in $S(F)$, which we need to show is a finitely-generated $S(M)$-module.

With $\widehat{R}$ denoting the ${\mathfrak m}$-adic completion of $R$ (which is reduced since $R$ is analytically unramified), let $\widehat{M} = \widehat{R} \otimes_R M \subseteq \widehat{R} \otimes_R F = \widehat{F}$. Then, $S(\widehat{F})$ is identified with $\widehat{R}[T_1,\cdots,T_r]$
and $S(\widehat{M})$ with its subalgebra $\widehat{R}[L_1,\cdots,L_d]$. Let $W$ be the integral closure of $S(\widehat{M})$ in $S(\widehat{F})$ which is a graded intermediate algebra. There are then natural inclusions of graded algebras as shown below.

$$
\begin{array}{ccccc}
S(M) = R[L_1,\cdots,L_d] & \subseteq & V = \bigoplus_{n \geq 0} V_n & \subseteq & R[T_1,\cdots,T_r] = S(F)\\
\ \ \ \ \ \ \ \ \ \ \ \ \rotatebox[origin=c]{270}{$\subseteq$} & & \!\!\!\!\rotatebox[origin=c]{270}{$\subseteq$} & & \!\!\!\!\!\!\!\!\!\!\!\!\!\!\!\!\!\!\rotatebox[origin=c]{270}{$\subseteq$}\\
S(\widehat{M}) = \widehat{R}[L_1,\cdots,L_d] & \subseteq  & W = \bigoplus_{n \geq 0} W_n & \subseteq & \widehat{R}[T_1,\cdots,T_r] = S(\widehat{F})
\end{array}
$$

Suppose that $W$ is a finitely-generated $S(\widehat{M})$-module. By Lemma \ref{lemma-fingen}, there is a $k$ such that for all $n \geq k$, $W_{n} \subseteq (S(\widehat{M}) )_{n-k}(S(\widehat{F}) )_k$. From the picture above it follows that
$V_{n} \subseteq (S(\widehat{M}) )_{n-k}(S(\widehat{F}) )_k \cap S(F)_{n} = (S({M}) )_{n-k}(S({F}) )_k$, where the last equality follows from the faithful flatness of $\widehat{R}$ over $R$. By Lemma \ref{lemma-fingen} again, $V$ is a finitely-generated $S(M)$-module. This reduces proving $(2)$ to reduced complete local rings which follows from Proposition \ref{prop-crnlr}.\\
$(3) \Rightarrow (1)$: Given $M \subseteq F$ with the integral closure $V$ of $S(M)$ in $S(F)$ being a finitely-generated $S(M)$-module, we will show that the ideal $I = I(M)$ satisfies the hypothesis of Lemma \ref{rees} and thereby conclude that $R$ is analytically unramified. First, since $Q = \frac{F}{M}$ is of finite length, it vanishes on localisation at any non-maximal prime $P$ of $R$. Hence $IR_P = R_P$, and  so $I$ is not contained in $P$. Hence $I$ is ${\mathfrak m}$-primary.

Since $V$ is a finitely-generated $S(M)$-module, by Lemma \ref{lemma-fingen} there is a $k \geq 0$ such that for all $n \geq k$, $V_n  \subseteq S_{n-k}(M)S_k(F) \subseteq S_n(F)$. By Lemma \ref{ann}, $\overline{I(S_n(M))}S_n(F) \subseteq V_n$.

Now, by Theorem 1 of \cite{BrnVsc2003}, $I(S_n(M)) = I^{\binom{n+r-1}{r}}$  up to integral closure. Hence,
$$
\overline{I^{\binom{n+r-1}{r}}}S_n(F) \subseteq V_n \subseteq S_{n-k}(M)S_k(F).
$$
This says that $\overline{I^{\binom{n+r-1}{r}}}$ is contained in the annihilator of $\frac{S_n(F)}{S_{n-k}(M)S_k(F)}$.
Next, we will show that for any $m$, this annihilator is contained in $I^m$ for all $n$ sufficiently large. 

Choose a basis $\{T_1,\cdots,T_r\}$ of $F$ and a set of generators $\{L_1,\cdots,L_d\}$ of $M$ as in Lemma \ref{lemma-firstentry}. Then, 
$S_n(F)$ is identified with the free module of homogeneous forms of degree $n$ in $T_1,\cdots,T_r$ with coefficients in $R$ and its submodule $S_{n-k}(M)S_k(F)$ is identified with the submodule generated by those forms that are products of $n-k$ of the $L_1,\cdots,L_d$ with any monomial of degree $k$ in $T_1,\cdots,T_r$. The coefficient of $T_1^n$ in any generator of 
$S_{n-k}(M)S_k(F)$ is therefore in ${\mathfrak m}^{n-k}$. It follows that the annihilator of $\frac{S_n(F)}{S_{n-k}(M)S_k(F)}$ is contained in ${\mathfrak m}^{n-k}$. Since $I$ is ${\mathfrak m}$-primary, 
given any $m$, there exists an $n(m)$ such that 
$$
\overline{I^{\binom{n+r-1}{r}}} \subseteq I^m.
$$
for all $n \geq n(m)$. In fact, if ${\mathfrak m}^t \subseteq I$, we may take $n(m) = mt+k$.

Define $m(n)=0$ for $n < \binom{n(1)+r-1}{r}$, $m(n) = 1$ for $\binom{n(1)+r-1}{r} \leq n < \binom{n(2)+r-1}{r}$ and so on. Then, for every $n$, $$
\overline{I^n} \subseteq I^{m(n)}.
$$
Clearly $m(n)$ goes to infinity as $n$ does, and so by Lemma \ref{rees}, $R$ is analytically unramified.
\end{proof}
     
\begin{remark}[Referee's remark]\label{refereeremark}
In Example 6 of the appendix of \cite{Ngt1962}, a  regular local ring $R$ and an algebra $S=R[d]$ are constructed so  that  $S$  is a normal analytically ramified local ring. By Rees's Theorem, there exists an ideal $I$ of $S$ so that the normalization of the Rees algebra $S[It]$  in $S[t]$ (where $t$ is an indeterminate) is not a finitely generated $S[It]$-module. This shows the non-triviality of the implication $(1) \Rightarrow (2)$ in Theorem 1, by showing that $S(M)$ cannot be replaced by an arbitrary finitely-generated $R$-algebra in that theorem.
\end{remark}  

\section*{Acknowledgements}
\noindent
The authors thank the referee for a careful reading and especially for Remark \ref{refereeremark}.


\begin{thebibliography}{10}

\bibitem[BrnVsc2003]{BrnVsc2003}
Bruns, Winfried and Vasconcelos, Wolmer.
\newblock{Minors of symmetric and exterior powers.}
\newblock{\em Journal of Pure and Applied Algebra} 179 : 235 - 240, 2003.

\bibitem[Mts1980]{Mts1980}
Matsumura, Hideyuki.
\newblock{Commutative Algebra, Second Edition.}
\newblock{Mathematics Lecture Note Series.}
\newblock{Benjamin/Cummings Publishing Company, 1980.}

\bibitem[Ngt1962]{Ngt1962}
Nagata, Masayoshi.
\newblock{Local rings.}
\newblock{Interscience Tracts in Pure and Applied Mathematics, No. 13.}
\newblock{Interscience Publishers (a division of John Wiley \& Sons, Inc.), New York-London, 1962.}


\bibitem[SwnHnk2006]{SwnHnk2006}
Swanson, Irena and Huneke, Craig.
\newblock{Integral closure of ideals, rings, and modules.}
\newblock{London Mathematical Society Lecture Note Series 336.}
\newblock{Cambridge University Press, 2006.}

\bibitem[Res1961]{Res1961}
Rees, David.
\newblock{A note on analytically unramified local rings.}
\newblock{\em Journal of the London Mathematical Society} 36 : 24 - 28, 1961.

\end{thebibliography}
\end{document}